\def\qed{\nopagebreak\hfill{\rule{4pt}{7pt}}}
\def\proof{\noindent {\it{Proof.} \hskip 2pt}}
\newtheorem{theo}{Theorem}[section]
\newtheorem{lemm}[theo]{Lemma}
\newtheorem{prop}[theo]{Proposition}
\newtheorem{coro}[theo]{Corollary}
\newtheorem{conj}[theo]{Conjecture}
\newtheorem{remark}[theo]{Remark}
\newdimen\Squaresize \Squaresize=11pt
\newdimen\Thickness \Thickness=0.7pt
\def\Square#1{\hbox{\vrule width \Thickness
   \vbox to \Squaresize{\hrule height \Thickness\vss
    \hbox to \Squaresize{\hss#1\hss}
   \vss\hrule height\Thickness}
\unskip\vrule width \Thickness} \kern-\Thickness}
\def\Vsquare#1{\vbox{\Square{$#1$}}\kern-\Thickness}
\def\moins{\raise 1pt\hbox{{$\scriptstyle -$}}}
\begin{document}

\begin{center}
{\large \bf Some Ratio Monotonic Properties of a New Kind of Numbers introduced by Z.-W. Sun}
\end{center}

\begin{center}
Brian Y. Sun%$^{1}$ and  Arthur L. B. Yang$^{2}$
\\[8pt]
Center for Combinatorics, LPMC-TJKLC\\
Nankai University, Tianjin 300071, P. R. China\\[6pt]

Email: %$^{1}$
{\tt brian@mail.nankai.edu.cn}\\
%$^{2}${\tt yang@nankai.edu.cn}
\end{center}

\vspace{0.3cm} \noindent{\bf Abstract.} Recently, Z. W. Sun introduced a new kind of numbers $S_n$ and also posed a conjecture on ratio monotonicity of combinatorial sequences related to $S_n$.  In this paper, by investigating some arithmetic properties of $S_n$, we give an affirmative answer to his conjecture. Our methods are based on a newly established criterion and interlacing method for log-convexity, and also the criterion for ratio log-concavity of a sequence due to Chen, Guo and Wang.

\noindent {\bf Keywords:} Log-concavity, log-convexity, ratio monotonicity.

\noindent {\bf AMS Classification 2010:} 05A20; 05A10; 11B65; 11B37

\section{Introduction}\label{Sec-1:introduction}
Recently, Z. W. Sun \cite{sun1,sun2} put forward  a series of conjectures about monotonicity of sequences of forms $\{z_{n+1}/z_n\}_{n\geq 0}^\infty$, $\{\sqrt[n]{z_n}\}_{n\geq 1}$ and $\{\sqrt[n+1]{z_{n+1}}/\sqrt[n]{z_{n}}\}_{n\geq 1}$, where $\{z_n\}_{n\geq 0}$ is number-theoretic or combinatorial sequences of positive numbers. By ratio monotonicity of a sequence $\{z_n\}_{n\geq 0}$ of positive numbers, we mean the sequence $\{z_{n}/z_{n-1}\}_{n\geq 1}$ is a monotonic sequence.
Up to now, much valuable progress has been made in this field by many scholars, including Chen et al. \cite{cgw}, Hou et al. \cite{hsw}, Luca and St\u{a}nic\u{a} \cite{LS}, Wang an Zhu \cite{wz} and Zhao \cite{zhao}, etc. The main object of this paper is to prove a conjecture due to Z. W. Sun \cite{sun1} on ratio monotonicity of a new kind of number.
This new kind of number is also introduced by him in \cite{sun1}.
It is given as follows:
\begin{equation}\label{Seq-S}
S_n=\sum_{k=0}^n\binom{n}{k}^2
\binom{2k}{k}(2k+1),\,\,n=0,1,2,\ldots.
\end{equation}

 By applying the Zeilberger's algorithm \cite{a=b,zeil} developed in \cite{chm,holo,fast}, we can not find a three-term recurrences for $S_n$ . But they all can give us the following four-term recurrences, i.e.,
 \begin{equation}\label{Rec:Seq-S}
 \begin{split}
 &9(n+1)^2S_n-(19n^2+74n+87)S_{n+1}+(n+3)(11n+29)S_{n+2}\\
 &-(n+3)^2S_{n+3}=0.
 \end{split}
 \end{equation}

 Note that all progress and results (mentioned above) related to this subject up to now can only be used to tackle with number theoretic or combinatorial sequences satisfying special expressions or three-term recurrences. That's why the following  conjecture is not solved yet.

\begin{conj}\emph{(\cite[Conjecture 5.2(ii)]{sun1})}\label{Conj:conj-1}
 The sequence $\{S_{n+1}/S_n\}_{n\geq 3}$ is strictly increasing to the limit $9$, and the sequence $\{\sqrt[n+1]{S_{n+1}}/\sqrt[n]{S_n}\}_{n\geq 1}$ is strictly decreasing to the limit $1$.
\end{conj}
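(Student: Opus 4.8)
The plan is to reduce both assertions to the control of the single ratio sequence $r_n=S_{n+1}/S_n$. The first assertion, that $\{r_n\}_{n\ge 3}$ is strictly increasing, is exactly the strict log-convexity of $\{S_n\}_{n\ge 3}$, i.e.\ $S_{n-1}S_{n+1}>S_n^2$. The second assertion, that $\{\sqrt[n+1]{S_{n+1}}/\sqrt[n]{S_n}\}$ is strictly decreasing, says that $\{\sqrt[n]{S_n}\}$ is strictly log-concave; by the criterion for ratio log-concavity of Chen, Guo and Wang it suffices to show that $\{S_n\}$ is ratio log-concave, namely that $r_n^2>r_{n-1}r_{n+1}$ beyond some small threshold. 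Everything therefore hinges on two inequalities for $r_n$: the monotonicity $r_{n+1}>r_n$ and the concavity-type bound $r_n^2>r_{n-1}r_{n+1}$. The first step is to convert the four-term recurrence \eqref{Rec:Seq-S} into a recurrence for the ratios: dividing by $S_{n+2}$ yields
\[
r_{n+2}=\frac{1}{(n+3)^2}\left[\frac{9(n+1)^2}{r_n r_{n+1}}-\frac{19n^2+74n+87}{r_{n+1}}+(n+3)(11n+29)\right],
\]
so that $r_{n+2}=F(n,r_n,r_{n+1})$ for an explicit rational function $F$.

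Before the monotonicity I would settle the limits, which are encoded asymptotically in \eqref{Rec:Seq-S}. The associated characteristic polynomial is $\lambda^3-11\lambda^2+19\lambda-9=(\lambda-9)(\lambda-1)^2$, whose dominant root is $9$; hence $S_n$ grows like a constant times $9^n$. This at once gives $r_n\to 9$ and $\sqrt[n]{S_n}\to 9$, so that $\sqrt[n+1]{S_{n+1}}/\sqrt[n]{S_n}\to 1$, matching the two stated limits. A finer look at the subdominant contribution of the double root $1$ indicates that $9-r_n$ is of order $1/n^2$, which will dictate how tight the rational bounds below must be.

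For the monotonicity $r_{n+1}>r_n$ I would use the interlacing method. From the displayed recurrence one checks that $F(n,x,y)$ is strictly decreasing in $x$ and strictly increasing in $y$ on the relevant range (this uses only that $r_n$ stays close to $9$, so that $19n^2+74n+87>9(n+1)^2/r_n$). The idea is to construct two explicit rational functions $u_n<v_n$, both increasing to $9$, with the interlacing property $v_n\le u_{n+1}$, and to prove $u_n\le r_n\le v_n$ for $n\ge N$ by induction: assuming the bounds at indices $n$ and $n+1$, the monotonicity of $F$ gives $r_{n+2}\le F(n,u_n,v_{n+1})$ and $r_{n+2}\ge F(n,v_n,u_{n+1})$, whence it suffices to verify $F(n,u_n,v_{n+1})\le v_{n+2}$ and $F(n,v_n,u_{n+1})\ge u_{n+2}$. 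Once the two-sided bound holds, interlacing yields $r_n\le v_n\le u_{n+1}\le r_{n+1}$, i.e.\ strict increase; the finitely many small indices down to $n=3$ are confirmed from the explicitly computed values $r_3<r_4<r_5<\cdots$.

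For the ratio log-concavity $r_n^2>r_{n-1}r_{n+1}$ I would run the same machinery, feeding the two-sided bounds $u_n\le r_n\le v_n$ into $r_{n-1}r_{n+1}$ and $r_n^2$ and reducing the inequality, after clearing denominators, to the positivity of an explicit polynomial in $n$ for large $n$, with the remaining small cases checked directly; invoking the Chen--Guo--Wang criterion then delivers the strict decrease of $\{\sqrt[n+1]{S_{n+1}}/\sqrt[n]{S_n}\}$. I expect the main obstacle to be the construction of the bounding functions $u_n,v_n$ themselves. Because $9-r_n$ decays only like $1/n^2$ while the interlacing requirement $v_n\le u_{n+1}$ forces the two bounds to differ by $O(1/n^3)$, the functions must be simultaneously very tight and yet simple enough that each inductive step collapses to a manifestly positive polynomial. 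Balancing these competing demands, and then verifying the resulting polynomial positivity together with the base cases, is the technical heart of the argument.
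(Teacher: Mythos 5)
Your high-level reduction is the same as the paper's: part one of the conjecture is the strict log-convexity of $\{S_n\}$, and part two is to be obtained from ratio log-concavity of $\{S_n\}$ via the Chen--Guo--Wang theorem on $\{\sqrt[n]{z_n}\}$. The genuine gap is in your plan for the ratio log-concavity $r_n^2>r_{n-1}r_{n+1}$: you propose to prove it by ``feeding the two-sided bounds $u_n\le r_n\le v_n$'' into both sides, but this interval arithmetic cannot work at the accuracy your interlacing scheme provides. Since $r_n=9-\tfrac{9}{2n^2}+O(1/n^3)$ (this is the content of Lemma \ref{Lem:h(n-1)-S_n-h(n)}), the quantity $r_n^2-r_{n-1}r_{n+1}$ is a second-difference quantity of order only $1/n^4$, whereas your bounds are of width of order $1/n^3$ --- indeed the interlacing condition $v_n\le u_{n+1}$ caps the width at $u_{n+1}-u_n\sim 9/n^3$, and you yourself state the bounds ``differ by $O(1/n^3)$''. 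Taking, say, $u_n=9-\tfrac{9}{2n^2}-\tfrac{C}{n^3}$ and $v_n=9-\tfrac{9}{2n^2}+\tfrac{C}{n^3}$, interval arithmetic yields only
\begin{equation*}
r_n^2-r_{n-1}r_{n+1}\;\ge\; u_n^2-v_{n-1}v_{n+1}\;=\;-\frac{36C}{n^3}+O\!\left(\frac{1}{n^4}\right),
\end{equation*}
a negative lower bound; the same failure occurs for any bounds whose width is of order $1/n^3$. Repairing this would force bounds of width $O(1/n^4)$, hence rigorous knowledge of the $1/n^3$ term in the expansion of $r_n$ and a far more delicate induction. The paper sidesteps this trap entirely: it proves ratio log-concavity by the criterion of Chen, Guo and Wang (Theorem \ref{Thm:cgw-ratio-lc}), whose hypotheses require only a one-sided estimate $\tfrac{3}{4}u(n)\le S_n/S_{n-1}\le h(n)$ together with a single polynomial inequality in the recurrence coefficients; the second-order information is extracted from the recurrence itself, not from a priori two-sided bounds on the ratios.

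That criterion, however, needs a three-term recurrence, which points to the other structural difference. The paper's enabling step --- absent from your proposal --- is the derivation of the three-term recurrence \eqref{three-rec-SS} from Guo and Liu's identity $4nS_n=(n+1)^2f_n-n^2f_{n-1}$; all of the paper's arguments (the log-convexity criterion of Theorem \ref{Thm:new criterion}, the interlacing with the single function $h(n)=9-\tfrac{9}{2n^2}$, and the CGW criterion) run off \eqref{three-rec-SS}. Your plan works directly with the four-term recurrence \eqref{Rec:Seq-S}. For part one this is not hopeless: your monotonicity claims for $F(n,x,y)$ are correct for ratios bounded below by $1$, and a two-variable interlacing induction could in principle be pushed through; but you never construct $u_n,v_n$ and you flag exactly this as the technical heart, so part one remains a plan rather than a proof. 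Finally, your limit argument is heuristic: Poincar\'e's theorem on ratios of solutions of linear recurrences with converging coefficients requires characteristic roots of pairwise distinct moduli, and $(\lambda-9)(\lambda-1)^2$ has a double root at $1$, so ``$S_n$ grows like a constant times $9^n$'' does not follow ``at once''; the limits $r_n\to 9$ and $\sqrt[n]{S_n}\to 9$ should instead be read off from the interlacing bounds and the $\liminf/\limsup$ inequalities of Rudin, as the paper does, and the finite initial inequality in Theorem \ref{Thm:Chen-Guo-Wang} (checked in the paper at $k=1$) must also be verified before that theorem can be invoked.
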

Recall that ratio monotonicity is closely related to log-convexity and log-concavity.
Let $\{z_n\}_{n\geq 0}^\infty$ be a sequence of positive numbers. It is called \emph{log-convex(resp.log-concave)} if for all $n\geq 1$
\begin{equation}
\label{Ineq:log-cv and log-cc}
z_{n-1}z_{n+1}\geq z_n^2~~~(\text{resp.}z_{n-1}z_{n+1}\leq z_n^2).
\end{equation}
 Meanwhile, the sequence $\{z_n\}_{n\geq 0}^\infty$ is called strictly log-convex(resp. log-concave) if the inequality in \eqref{Ineq:log-cv and log-cc} is strict for all $n\geq 1$. Many criterions have been established to show that a number of number-theoretic and combinatorial sequences of positive numbers are log-convex or log-concave, see, for example, Brenti \cite{brenti}, Chen \cite{chen}, Chen and Xia \cite{cx}, Chen et al. \cite{cgw}, Do\v{s}li\'{c} \cite{D}, Do\v{s}li\'{c} and Veljan \cite{DV}, Liu and Wang \cite{WL}, Sun and Zhao \cite{sz}, Wang and Yeh \cite{wy}, Yang and Zhao \cite{yz} and Stanley \cite{stanley}. In terms of the inequality in \eqref{Ineq:log-cv and log-cc}, it is clear that a sequence $\{z_n\}_{n\geq 0}^{\infty}$ is log-convex(resp. log-concave) if and only if the sequence $\{\frac{z_{n+1}}{z_n}\}_{n\geq 0}$ is increasing(resp. decreasing). The log-convex and log-concave sequences have been extensively investigated as they are often arise in combinatorics, algebra, geometry, analysis, probability and statistics, the reader can refer to \cite{brenti,wy,stanley} for details.

 In this paper, our aim is to study some properties of the sequence $\{S_n\}_{n=0}^\infty$ and then confirm the Conjecture \ref{Conj:conj-1}, i.e.,
  \begin{theo}\label{Thm:confirm conj}
  Conjecture \ref{Conj:conj-1} is true.
  \end{theo}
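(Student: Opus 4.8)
The plan is to treat the two assertions of Conjecture \ref{Conj:conj-1} separately, extracting everything from the four-term recurrence \eqref{Rec:Seq-S}, since no three-term recurrence is available. First I would pin down the two limits. Writing $r_n=S_{n+1}/S_n$ and reading off the leading ($n^2$) coefficients of \eqref{Rec:Seq-S}, the growth of $S_n$ is governed by the characteristic equation $L^3-11L^2+19L-9=0$, which factors as $(L-9)(L-1)^2=0$. Thus $L=9$ is the dominant simple root while $L=1$ is a double root, so $S_n\sim A\cdot 9^n$ for some $A>0$ and $r_n\to 9$. This already settles both limiting values: $r_n\to 9$ for the first sequence, and $\sqrt[n]{S_n}\to 9$, whence $\sqrt[n+1]{S_{n+1}}/\sqrt[n]{S_n}\to 1$ for the second.

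For the monotonicity in the first assertion I would use the interlacing method. Dividing \eqref{Rec:Seq-S} by $S_n$ turns it into the nonlinear three-term relation
\begin{equation*}
9(n+1)^2-(19n^2+74n+87)r_n+(n+3)(11n+29)r_n r_{n+1}-(n+3)^2 r_n r_{n+1} r_{n+2}=0 .
\end{equation*}
Guided by the expansion of $r_n$ near $9$, I would guess an increasing rational function $f(n)$ with $f(n)\uparrow 9$ and prove, by induction on $n$ using this relation, the two-sided bound $f(n)<r_n<f(n+1)$ for all $n\ge 3$, after checking the base cases from the explicit values $S_0,S_1,\dots$. The interlacing inequalities then give $r_n<f(n+1)<r_{n+1}$, so $\{r_n\}_{n\ge 3}$ is strictly increasing, which is exactly the first assertion. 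This is where the newly established criterion for log-convexity does the work, as it is tailored to recurrences of order higher than three.

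For the second assertion I would invoke the criterion of Chen, Guo and Wang: it suffices to establish that $\{S_n\}$ is \emph{ratio log-concave}, i.e. that $\{r_n\}$ is log-concave, $r_{n-1}r_{n+1}<r_n^2$, together with the easily verified initial condition of their theorem. I expect to obtain this by a second interlacing argument on the same ratio recurrence, now sharpening the rational bounds $f(n)$ into control on the second differences of $\log r_n$, so that ratio log-concavity holds for $n$ beyond an explicit threshold, the finitely many remaining cases being checked directly. Their criterion then yields that $\{\sqrt[n+1]{S_{n+1}}/\sqrt[n]{S_n}\}$ is strictly decreasing, and with the limit $1$ found above this completes the proof.

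The main obstacle is precisely the four-term nature of \eqref{Rec:Seq-S}: every standard tool presumes a three-term recurrence or a closed product/sum form, so the crux is making the interlacing induction close for the nonlinear relation among $r_n,r_{n+1},r_{n+2}$. Concretely, the difficulty is choosing $f(n)$ sharp enough that both $f(n)<r_n$ and $r_n<f(n+1)$ reduce to polynomial inequalities in $n$ that hold for all $n\ge 3$, and then carrying out the analogous but heavier bookkeeping needed to pin down the log-concavity of $\{r_n\}$ for the Chen--Guo--Wang step.
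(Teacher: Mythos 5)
The central gap is your starting premise that ``no three-term recurrence is available.'' The paper's key first step is exactly the opposite: following Guo and Liu, one writes $4nS_n=(n+1)^2f_n-n^2f_{n-1}$, applies Zeilberger's algorithm to both sides, and combines the two resulting higher-order recurrences to obtain the three-term recurrence \eqref{three-rec-SS} for $S_n$. Every tool the paper then uses --- the new log-convexity criterion (Theorem \ref{Thm:new criterion}), the interlacing bound $h(n)=9-\frac{9}{2n^2}$ of Lemma \ref{Lem:h(n-1)-S_n-h(n)}, and the Chen--Guo--Wang criterion (Theorem \ref{Thm:cgw-ratio-lc}) --- is a statement about sequences satisfying a \emph{three}-term recurrence; in particular your remark that the newly established criterion ``is tailored to recurrences of order higher than three'' is simply false, since Theorem \ref{Thm:new criterion} assumes $a(n)z_{n+1}+b(n)z_n+c(n)z_{n-1}=0$.

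Bypassing that step and working directly with \eqref{Rec:Seq-S} leaves three concrete holes. First, the limit: the characteristic polynomial $(L-9)(L-1)^2$ has a repeated root, so Poincar\'e's theorem on ratios of solutions does not apply in its standard form, and even granting that $r_n$ converges to some root, nothing in your argument excludes the limit $1$ or non-convergence; the paper pins down $9$ only \emph{after} proving monotonicity, using $s_1=7$ together with $Y(n)\to 9$. Second, your interlacing induction on the nonlinear relation among $r_n,r_{n+1},r_{n+2}$ is never closed: no candidate $f(n)$ is produced, and you yourself flag the closure of this induction as the main obstacle --- but that closure is the entire content of a proof of the first assertion. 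Third, and most seriously, the ratio log-concavity needed for the second assertion: Theorem \ref{Thm:cgw-ratio-lc} presupposes a recurrence $z_n=u(n)z_{n-1}+v(n)z_{n-2}$, so on your premises it cannot be invoked at all, and a direct proof of $r_{n-1}r_{n+1}<r_n^2$ would require asymptotic control of $r_n$ to order $n^{-4}$ (since $r_n=9-\Theta(n^{-2})$, the second difference of $\log r_n$ is only $\Theta(n^{-4})$), which is far beyond what two-sided bounds of the form $f(n)<r_n<f(n+1)$ can deliver. The missing idea, in short, is the derivation of \eqref{three-rec-SS}; once it is in hand, all three difficulties dissolve in the way the paper carries out.
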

  We prove Theorem \ref{Thm:confirm conj} through the following steps. Firstly, we give a three-term recurrence for $S_n$, the technique is based on Guo and Liu \cite{GL}. And then prove the first part of Conjecture \ref{Conj:conj-1} by using a newly established criterion for log-convexity and also employing the interlacing method \cite{D,WL} developed for log-convexity and log-concavity. In the end, we prove the second part of Conjecture \ref{Conj:conj-1} with the criterion for ratio log-convexity of a sequence, which was built by Chen, Guo and Wang \cite{cgw}.
 \section{A new criterion for log-convexity}
 \label{Sec-3: new criterion for lcc}
 In this section, we will establish a criterion for log-convexity of a sequence satisfying three-term recurrence relationship. As a matter of fact, Yang and Zhao \cite{yz} gave a similar criterion for log-concavity of a sequence satisfying three-term recurrence. Essentially, the following criterion for log-convexity can be considered as a counterpart of the criterion for log-concavity due to Yang and Zhao \cite{yz}.
\begin{theo}\label{Thm:new criterion}
Let $\{z_n\}_{n\geq 0}$ be a positive sequence satisfying the following three-term recurrence
relation:
\begin{equation}\label{Rec:three-term}
a(n)z_{n+1}+b(n)z_n+c(n)z_{n-1}=0, \,\,n\geq 1,
\end{equation}
where $a(n), b(n),c(n)$  are real functions in $n$.
 Suppose that there exists an integer $N$  such that for any $n>N$,
 \begin{itemize}
 \item[(i)] it holds $a(n)>0,$ and
 \item[(ii)]$\Delta_n=b(n)^2-4a(n)c(n)\geq 0,$ and
 \item[(iii)]$\frac{-b(n)-\sqrt{\Delta_n}}{2a(n)}\leq \frac{z_n}{z_{n-1}}\leq \frac{-b(n)+\sqrt{\Delta_n}}{2a(n)}.$
 \end{itemize}
 Then the sequence $\{z_n\}_{n\geq N}$ is log-convex, namely, $z_n^2\leq z_{n-1}z_{n+1}.$
\end{theo}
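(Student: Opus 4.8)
The plan is to work entirely with the ratio sequence $r_n = z_n/z_{n-1}$, which is well defined and strictly positive for $n\geq 1$ since every $z_n>0$. As recalled in the Introduction, log-convexity of $\{z_n\}_{n\geq N}$ is equivalent to the monotonicity $r_{n+1}\geq r_n$ for all $n\geq N+1$, so it suffices to establish this single inequality for every $n>N$. First I would divide the recurrence \eqref{Rec:three-term} by $z_n$ and use $z_{n-1}/z_n = 1/r_n$ to rewrite it purely in terms of ratios, namely $a(n)r_{n+1}+b(n)+c(n)/r_n=0$, which gives the explicit expression
\begin{equation*}
r_{n+1} = \frac{-b(n)r_n - c(n)}{a(n)r_n}.
\end{equation*}

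Next I would reduce the target inequality $r_{n+1}\geq r_n$ to a quadratic condition on $r_n$. Substituting the expression above and clearing the denominator $a(n)r_n$, which is strictly positive by hypothesis (i) together with $r_n>0$, one sees that $r_{n+1}\geq r_n$ is equivalent to
\begin{equation*}
g_n(r_n):=a(n)r_n^2+b(n)r_n+c(n)\leq 0.
\end{equation*}
The essential point is that multiplying through by $a(n)r_n>0$ preserves the direction of the inequality; this is precisely where hypothesis (i) and the positivity of $\{z_n\}$ are used.

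Finally I would analyze the quadratic $g_n$. Since $a(n)>0$ the parabola opens upward, and hypothesis (ii) guarantees $\Delta_n=b(n)^2-4a(n)c(n)\geq 0$, so $g_n$ has two real roots
\begin{equation*}
x_\pm = \frac{-b(n)\pm\sqrt{\Delta_n}}{2a(n)}, \qquad x_-\leq x_+.
\end{equation*}
An upward-opening parabola is nonpositive exactly on the closed interval between its roots, that is, $g_n(x)\leq 0$ if and only if $x_-\leq x\leq x_+$. But this interval containment for $x=r_n$ is exactly what hypothesis (iii) asserts, so $g_n(r_n)\leq 0$ follows at once, and hence $r_{n+1}\geq r_n$. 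I do not expect a genuine obstacle here: the argument is a direct algebraic manipulation, and its whole content is the observation that conditions (i)--(iii) are tailored so that $r_n$ lands between the two roots of $g_n$. The only points demanding care are the sign of $a(n)r_n$ when clearing denominators and the bookkeeping of indices, since the hypotheses are assumed for $n>N$ and each such $n$ yields the log-convexity inequality $z_n^2\leq z_{n-1}z_{n+1}$ at that index, which collectively give log-convexity of $\{z_n\}_{n\geq N}$.
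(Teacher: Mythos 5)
Your proposal is correct and follows essentially the same route as the paper: both pass to the ratio sequence $q_n=z_n/z_{n-1}$, observe that hypotheses (i)--(iii) force the quadratic $a(n)q_n^2+b(n)q_n+c(n)$ to be nonpositive (since $q_n$ lies between the two real roots of an upward-opening parabola), and then use the recurrence divided by $z_n$ to convert this into $q_{n+1}\geq q_n$. Your version is if anything slightly cleaner, since you state explicitly that condition (iii) is what yields the quadratic inequality, whereas the paper attributes it only to (i) and (ii).
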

\begin{proof} Let $q_n=\frac{z_n}{z_{n-1}}$.
It suffices to show that $q_n\leq q_{n+1}$ for any $n\geq N.$ On one hand, the conditions $(i)$ and $(ii)$ imply that
$$a(n)q_n^2+b(n)q_n+c(n)\leq 0,\,\mbox{for} \,n\geq N.$$
Since $\{z_n\}_{n\geq 0}$ is a positive sequence, so is $\{q_n\}_{n\geq 1}.$
Thus, the above inequality is equivalent to  the following
\begin{equation}\label{equu-1}
a(n)q_n+b(n)+\frac{c(n)}{q_n}\leq 0, \,\, for \,\,n\geq N.
\end{equation}
On the other hand, dividing both sides of \eqref{Rec:three-term} by $z_n$, we obtain
\begin{equation}\label{equu-2}
a(n)q_{n+1}+b(n)+\frac{c(n)}{q_n}=0.
\end{equation}
Combining \eqref{equu-1} and \eqref{equu-2}, we get
$$a(n)q_{n+1}\geq a(n)q_{n},\,\,\mbox{for}\,\,n\geq N.$$
By the condition $(i)$, we have  $q_{n+1}\geq q_{n}$ for any $n\geq N$.
This completes the proof.
\qed
\end{proof}

 \section{The interlacing method}\label{Sec-2:the interlacing method}
 The interlacing method can be found in \cite{WL}, yet it was formally considered as a method to solve logarithmic behavior of combinatorial sequences by Do\u{s}li\'{c} and Veljan \cite{DV}, in which is also called sandwich method.

 To be self-contained in our paper. Let us give a simple introduction to the method.
 Suppose that $\{z_n\}_{n\geq 0}^\infty$ is a sequence of positive numbers. We define the sequence of consecutive quotients, i.e.,
 $$q_n=\frac{z_n}{z_{n-1}},\,\,\,n\geq 1.$$

 By the inequality in \eqref{Ineq:log-cv and log-cc}, the log-convexity or log-concavity of a sequence $\{z_n\}_{n\geq 0}$ is equivalent, respectively, to $q_n\leq q_{n+1}$ or $q_n\geq q_{n+1}$ for all $n\geq 1$. So it suffices to consider whether the sequence $\{q_n\}_{n\geq 1}$ decreases or increases, i.e., the ratio monotonicity.
 To prove $\{q_n\}_{n\geq 1}$ increases(resp. decreases), it is enough to
 find an increasing(resp. a decreasing) sequence $\{b_n\}_{n\geq 0}$ such that
 \begin{equation}\label{Ineq: interlacing mehtod}
 b_{n-1}\leq q_n\leq b_n\,\,(\text{resp.~}b_{n-1}\geq q_n\geq b_n)
 \end{equation}
  holds for all $n\geq 1$, or at least for all $n\geq N$ for some positive integer $N$. Clearly, this implies $q_n\leq q_{n+1}$(resp. $q_n\geq q_{n+1}$ ) since we have $\ldots\leq b_{n-1}\leq q_{n}\leq b_n\leq q_{n+1}\leq b_{n+1}\leq \ldots$(resp. $\ldots\geq b_{n-1}\geq q_{n}\geq b_n\geq q_{n+1}\geq b_{n+1}\geq \ldots$ ) by
  \eqref{Ineq: interlacing mehtod}. As a summary, we have the following proposition.
  \begin{prop}
  \label{Prop:interlacing method}
  Suppose that $\{z_n\}_{n\geq 0}$ is a sequence of positive numbers. Then for some positive integer $N$, the sequence $\{z_n\}_{n\geq N}$ is log-convex(resp. log-concave) if there exists an increasing(resp. a decreasing) sequence $\{b_n\}_{n\geq 0}$ such that
  \begin{equation*}
 b_{n-1}\leq q_n\leq b_n\,\,(\text{resp.~}b_{n-1}\geq q_n\geq b_n)
 \end{equation*}
 holds for $n\geq N+1$.
  \end{prop}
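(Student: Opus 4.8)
The plan is to reduce the log-convexity (resp. log-concavity) of $\{z_n\}_{n\geq N}$ to the monotonicity of the quotient sequence $\{q_n\}$, and then to read off that monotonicity directly from the interlacing hypothesis. First I would recall the equivalence behind \eqref{Ineq:log-cv and log-cc}: since every $z_n$ is positive, the inequality $z_{n-1}z_{n+1}\geq z_n^2$ is equivalent, after dividing by $z_{n-1}z_n>0$, to $q_{n+1}\geq q_n$, where $q_n=z_n/z_{n-1}$. Hence proving that $\{z_n\}_{n\geq N}$ is log-convex amounts exactly to proving $q_n\leq q_{n+1}$ for every relevant index.

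Next I would exploit the two given interlacing inequalities at consecutive indices. Applying the hypothesis $b_{n-1}\leq q_n\leq b_n$ at index $n$ yields the upper bound $q_n\leq b_n$, while applying it at index $n+1$ yields the lower bound $b_n\leq q_{n+1}$. Chaining these through the common middle term $b_n$ gives
$$q_n\leq b_n\leq q_{n+1},$$
so that $q_n\leq q_{n+1}$. The only point requiring care is the index range: the upper bound $q_n\leq b_n$ needs $n\geq N+1$, whereas the lower bound $b_n\leq q_{n+1}$ is available as soon as $n+1\geq N+1$, i.e.\ $n\geq N$; taking $n\geq N+1$ validates both bounds simultaneously and delivers $q_n\leq q_{n+1}$ for all $n\geq N+1$, which is precisely the log-convexity of $\{z_n\}_{n\geq N}$.

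Finally I would observe that the log-concave case is entirely symmetric: with a decreasing sequence $\{b_n\}$ and the reversed inequalities $b_{n-1}\geq q_n\geq b_n$, the identical chaining produces $q_n\geq b_n\geq q_{n+1}$, hence $q_n\geq q_{n+1}$ and log-concavity. Because the whole argument is just a two-line chaining of inequalities through a shared intermediate term, there is no genuine obstacle here; the proposition is essentially a formal restatement of the sandwich principle described in the preceding paragraphs. If anything, the only thing to watch is the bookkeeping of indices, so that the stated range $n\geq N+1$ in the hypothesis matches the range over which the conclusion $q_n\leq q_{n+1}$ (equivalently log-convexity from index $N$ on) is asserted.
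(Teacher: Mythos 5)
Your proposal is correct and follows exactly the paper's own argument: reduce log-convexity (resp. log-concavity) to the monotonicity of the quotients $q_n=z_n/z_{n-1}$, then chain the interlacing inequalities at consecutive indices through the shared term $b_n$ to get $q_n\leq b_n\leq q_{n+1}$ (resp.\ with inequalities reversed). Your added care about the index range $n\geq N+1$ is a minor tightening of bookkeeping the paper glosses over, but the substance is identical.
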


 \section{Log-behaviors of the sequence related to $S_n$}\label{Sec-4: S is lcc}
 In this section, we will give our main results. We first give a three-term recurrence relationship of $S_n$ in Subsection
 \ref{Sec-4.1: three-term rec for S}.
 And then prove that the sequence $\{S_n\}_{n\geq 0}$ is log-convex by using interlacing method and the criterion built in Section \ref{Sec-3: new criterion for lcc}.
 \subsection{A three-term recurrence for $S_n$}
 \label{Sec-4.1: three-term rec for S}
Usually, when considering the logarithmic behavior or ratio monotonicity of combinatorial sequences, it is difficult to tackle with the sequences satisfying four-term recurrence relationship. So, for a sequence satisfying recurrence of order $3$ or more, it is natural to find whether it has a three-term recurrence or not. In some cases, one can find a three-term recurrence by solving a linear system of equations. However, this method is not a valid way for general case. Fortunately, we can find a three-term recurrence for $S_n$ with this method but we will not intend to use this awkward way here.

In this paper we can deduce a three-term recurrence relationship for $S_n$ indirectly from Guo and Liu \cite{GL}. When attacking some conjectures of Z.W. Sun on the divisibility of certain double-sums of $S_n$,
they first established
$$4nS_n=(n+1)^2f_n-n^2f_{n-1},$$ where
$$f_n=\sum_{k=0}^n\frac{1}{k+1}\binom{2k}{k}
\left(6k{\binom{n}{k}}^2+\binom{n}{k}\binom{n}{k+1}\right).$$
Let $u_n=4nS_n$ and $v_n=(n+1)^2f_n-n^2f_{n-1}$. Then they obtained the following recurrences by applying Zeilberger's algorithm to $u_n$ and $v_n$, respectively.
\begin{equation}\label{Rec: u_n order 3}
\begin{split}
n(n+1)(n+2)(n +3)u_{n+3}-n(n+1)(n+3)(11n+ 29)u_{n+2}\\
+n(n+2)(19n^2+74n+87)u_{n+1}-9(n+ 1)^3(n + 2)u_n = 0,
\end{split}
\end{equation}
and
\begin{equation}\label{Rec: v_n order 3}
\begin{split}
&(n + 2)(n + 3)(128n^4 + 864n^3 + 2016n^2 + 1994n + 693)v_{n+3}-(1408n^6 \\
& + 17696n^5+ 88512n^4 + 225582n^3 + 309049n^2 + 215886n + 59535)v_{n+2}\\
& + (2432n^6+ 30880n^5 + 155712n^4 + 399646n^3 + 550013n^2 + 384657n \\
&+ 106920)v_{n+1}- 9(n + 1)^2
(128n^4 + 1376n^3 + 5376n^2 + 9130n + 5695)v_n \\
&= 0.
\end{split}
\end{equation}
Note that $u_n=v_n$ for all $n\geq 0$, and by combining \eqref{Rec: u_n order 3} and \eqref{Rec: v_n order 3}, they acquired a three-term recurrence relationship for $u_n$ as follows:
\begin{equation}\label{Rec: u_n order 2}
\begin{split}
&n(n+1)(n+2)(4n+3)(4n+7)u_{n+2}-n(4n+3)(4n+11)\\
&(10n^2+30n+23)u_{n+1}+9(n+1)^3(4n+11)(4n+7)u_n=0.
\end{split}
\end{equation}

Obviously, the recurrence \eqref{Rec: u_n order 2}  implies a
three-term recurrence for $S_n$, i.e.,
\begin{equation}\label{three-rec-SS}
\begin{split}
&(n+1)^2(4n-1)(4n+3)S_{n+1}-(4n-1)(4n+7)\left(10n^2+10n+3\right) S_{n}\\
&\quad+9n^2(4n+3)(4n+7)S_{n-1}=0.
\end{split}
\end{equation}

With the above recurrence \eqref{three-rec-SS} in hand, we can prove the log-convexity of $\{S_n\}_{n\geq 0}$, which implies the ratio monotonicity of $\{S_n\}_{n\geq 0}$.
\subsection{The sequence $\{S_n\}_{n\geq 0}$ is log-convex}
\begin{theo}\label{Thm:log-convex-S_n}
 The sequence $\{S_n\}_{n\geq 0}$ is strictly log-convex, i.e., for all $n\geq 1$, there holds $S_n^2<S_{n+1}S_{n-1}$.
\end{theo}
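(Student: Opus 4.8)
The plan is to apply the criterion of Theorem \ref{Thm:new criterion} to the three-term recurrence \eqref{three-rec-SS}, which is already in the form \eqref{Rec:three-term} with
$$a(n)=(n+1)^2(4n-1)(4n+3),\qquad b(n)=-(4n-1)(4n+7)(10n^2+10n+3),\qquad c(n)=9n^2(4n+3)(4n+7).$$
Condition $(i)$ is immediate, since every factor of $a(n)$ is positive for $n\geq 1$. For condition $(ii)$ I would expand $\Delta_n=b(n)^2-4a(n)c(n)$ as a polynomial in $n$; since the leading terms of $a,b,c$ produce the limiting quadratic $x^2-10x+9=(x-1)(x-9)$ with positive discriminant $64$, the leading coefficient of $\Delta_n$ is positive, so $\Delta_n>0$ for all large $n$ and only finitely many small indices must be inspected.

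The heart of the argument is condition $(iii)$: I must show that $q_n=S_n/S_{n-1}$ lies between the two roots $r_\pm(n)=\frac{-b(n)\pm\sqrt{\Delta_n}}{2a(n)}$ of $a(n)x^2+b(n)x+c(n)$. Because $a(n)>0$, this is equivalent to the single inequality $a(n)q_n^2+b(n)q_n+c(n)\leq 0$. To prove it I would use the interlacing (sandwich) method underlying Proposition \ref{Prop:interlacing method}. Dividing \eqref{three-rec-SS} by $S_n$ gives $q_{n+1}=f(n,q_n)$ with $f(n,x)=-\frac{b(n)}{a(n)}-\frac{c(n)}{a(n)x}$, which is strictly increasing in $x$ for $x>0$ because $a(n),c(n)>0$. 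I would then guess explicit rational bounding functions $L(n)\leq q_n\leq U(n)$, both tending to $9$, and prove the sandwich by induction: given $L(n)\leq q_n\leq U(n)$, monotonicity of $f$ yields $f(n,L(n))\leq q_{n+1}\leq f(n,U(n))$, so it suffices to verify the two polynomial inequalities $f(n,L(n))\geq L(n+1)$ and $f(n,U(n))\leq U(n+1)$. Evaluating $a(n)x^2+b(n)x+c(n)$ at $x=L(n)$ and $x=U(n)$ and using $a(n)>0$, I would then check that $r_-(n)\leq L(n)$ and $U(n)\leq r_+(n)$, so the rational bounds nest inside the root interval and condition $(iii)$ follows.

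With $(i)$--$(iii)$ verified for $n\geq N$, Theorem \ref{Thm:new criterion} gives log-convexity of $\{S_n\}_{n\geq N}$. To obtain strict log-convexity for all $n\geq 1$, I would keep the discriminant inequality strict, $\Delta_n>0$, which places $q_n$ strictly between the roots and forces $q_n<q_{n+1}$; the finitely many remaining indices $1\leq n<N$ are then dispatched by computing $S_0,\dots,S_{N+1}$ directly from \eqref{Seq-S} and checking $S_n^2<S_{n+1}S_{n-1}$ in each case.

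The step I expect to be the main obstacle is condition $(iii)$, and within it the calibration of the upper bound $U(n)$. Since $q_n$, $r_+(n)$, and any natural bound all converge to the same value $9$, the inequality $U(n)\leq r_+(n)$ is a near-degenerate comparison in which the top-order terms cancel: substituting $x=9$ into $a(n)x^2+b(n)x+c(n)$ annihilates the $n^4$ term, so $U(n)$ must be tuned to the correct subleading order before the associated polynomial inequalities can be pushed through. By comparison, the discriminant computation in $(ii)$ is a routine though lengthy factorization, and the base cases are purely numerical.
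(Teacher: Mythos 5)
Your plan is sound and would yield the theorem; it is in essence a hybrid of the two proofs the paper actually gives. The paper's first proof applies Theorem \ref{Thm:new criterion} exactly as you propose, but it verifies condition (iii) by inducting directly on the irrational bounds $X(n)\leq s_n\leq Y(n)$ (the roots themselves), which forces it to trap $\sqrt{\Delta_n}$ and a second radical between explicit quartic polynomials; your replacement of this by an induction on rational bounds $L(n)\leq q_n\leq U(n)$, followed by the sign checks $P_n(L(n))\leq 0$ and $P_n(U(n))\leq 0$ for $P_n(x)=a(n)x^2+b(n)x+c(n)$, avoids all square-root manipulation and is cleaner. The paper's second proof (its interlacing proof) confirms your plan is realizable and that your diagnosis of the crux is exactly right: it exhibits $h(n)=9-\tfrac{9}{2n^2}$ with $h(n-1)<s_n<h(n)$, proved by precisely the induction you describe, and the calibration constant $\tfrac{9}{2}$ is forced because $P_n(9)=576n^2+144n-54$ collapses to degree two (both the $n^4$ \emph{and} $n^3$ terms cancel, slightly more than you claim), whence $Y(n)=9-\tfrac{9}{2n^2}+O(n^{-3})$. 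Note, however, that once one has \emph{interlacing} rational bounds, i.e.\ $U(n)=L(n+1)$ as with $h$, Proposition \ref{Prop:interlacing method} already gives log-convexity, and the detour back through Theorem \ref{Thm:new criterion} is redundant---that is exactly the paper's second proof; your extra root-comparison step is only needed because your $L,U$ are not assumed to interlace. Two small cautions: first, strictness does not follow from $\Delta_n>0$ alone as you assert---you need the sandwich or the sign checks to be strict, so that $P_n(q_n)<0$, which then gives $q_{n+1}>q_n$ strictly in the argument of Theorem \ref{Thm:new criterion}; second, your way of certifying the lower comparison by evaluating $P_n$ at $L(n)$ is in fact more careful than the paper's own treatment, since the paper's auxiliary function $L(n)=\frac{(4n+7)(10n^2+10n+3)}{2n(n+1)^2(4n+3)}$ is claimed to ``obviously'' dominate $X(n)$ for all $n\geq 1$, yet $L(n)\to 0$ while $X(n)\to 1$, so that claim fails from $n=5$ onward (the theorem and the interlacing proof are unaffected, but your route patches this gap).
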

We prove this result with two ways, one is interlacing method introduced in Section \ref{Sec-2:the interlacing method} and the other is the new criterion established in Section \ref{Sec-3: new criterion for lcc}.

\subsubsection{Proof of Theorem \ref{Thm:log-convex-S_n} via Theorem \ref{Thm:new criterion}}
\proof
By the recurrence relation \eqref{three-rec-SS}, we know that
\begin{align*}
a(n)&=(n+1)^2(4n-1)(4n+3),\\
b(n)&=-(4n-1)(4n+7)\left(10n^2+10n+3\right),\\
c(n)&=9n^2(4n+3)(4n+7),\\
\Delta_n&=16384 n^8+81920 n^7+143360 n^6+101888 n^5+19328 n^4-13120 n^3\\
&\quad-6884 n^2-84 n+441\geq 0, \,\,\text{for~}n\geq 0.
\end{align*}
Obviously, $a(n)>0$ for any $n\geq 1.$
Let $s_n=\frac{S_{n}}{S_{n-1}}$, it suffices to show that
\begin{equation}\label{ineq-s_n}
\frac{-b(n)-\sqrt{\Delta_n}}{2a(n)}\leq s_n\leq \frac{-b(n)+\sqrt{\Delta_n}}{2a(n)}.
\end{equation}
Now we first prove the inequality of the right hand side of \eqref{ineq-s_n}.
We proceed our proof by inductive argument. For the sake of brief, let
\begin{align*}
X(n)=\frac{-b(n)-\sqrt{\Delta_n}}{2a(n)},\,\,  Y(n)=\frac{-b(n)+\sqrt{\Delta_n}}{2a(n)}.
\end{align*}
To begin with,
\begin{align*}
&s_1=7,&\\
&X(1)=\frac{\left(759-3 \sqrt{38137}\right)}{168} \thickapprox 1.03059,&\\
&Y(1)=\frac{\left(759+3 \sqrt{38137}\right)}{168} \approx 8.00512.&
\end{align*}
Hence,
\begin{equation}\label{initial-value}
X(1)<s_1<Y(1).
\end{equation}
Suppose that $s_{n}<Y(n)$ for $n\geq 1,$ we need to prove that $s_{n+1}<Y(n+1).$
In terms of inductive hypothesis,
\begin{align*}
s_{n+1}&=\frac{(4n+7)\left(10n^2+10n+3\right)}{(n+1)^2(4n+3)}
-\frac{9n^2(4n+7)}{(n+1)^2(4n-1)s_n}\\
&<\frac{4n+7}{(n+1)^2}\left(\frac{10n^2+10n+3}{4n+3}
-\frac{9n^2}{(4n-1)Y(n)}\right)\\
&=\frac{4n+7}{(n+1)^2}\left(\frac{(4 n-1) (4 n+7) \left(10 n^2+10 n+3\right)+A(n)}{2 (4 n-1) (4 n+3) (4 n+7)}\right),
\end{align*}
where
\begin{align*}
&A(n)\\
&\,=\sqrt{(4 n-1) (4 n+7) \left(1024 n^6+3584 n^5+4032 n^4+1888 n^3+140 n^2-204 n-63\right)}.
\end{align*}
Note that
\begin{align*}
\delta_n&\triangleq Y(n+1)-\frac{4n+7}{(n+1)^2}\left(\frac{(4 n-1) (4 n+7) \left(10 n^2+10 n+3\right)+A(n)}{2 (4 n-1) (4 n+3) (4 n+7)}\right)\\
&=\frac{9(4 n-1) \left(16 n^3+58 n^2+64 n+19\right)-(4n^3+23n^2+44n+28)\sqrt{B(n)}}{2(n+1)^2(n+2)^2(4n-1)(4n+3)(4n+7)}\\
&\quad+\frac{(4n^3+7n^2+2n-1)\sqrt{C(n)}}{2(n+1)^2(n+2)^2(4n-1)(4n+3)(4n+7)},
\end{align*}
where
\begin{align*}
B(n)&=16384 n^8+81920 n^7+143360 n^6+101888 n^5+19328 n^4-13120 n^3\\
&\quad-6884 n^2-84 n+441
\end{align*}
and
\begin{align*}
C(n)&=16384 n^8+212992 n^7+1175552 n^6+3599872 n^5+6693248 n^4\\
 &\quad+7734976 n^3+5418076 n^2+2098212 n+343233.
 \end{align*}

 For $n\geq 1$, since
 \begin{align*}
 &\left(128 n^4+320 n^3+160 n^2-2 n\right)^2-B(n)=4992 n^4+12480 n^3+6888 n^2\\
 &\quad+84 n-441>0
 \end{align*}
 and
 \begin{align*}
 &(128 n^4 + 832 n^3 + 1888 n^2 + 1790 n)^2-C(n)=-150144 n^4-975936 n^3\\
 &\quad -2213976 n^2-2098212 n-343233<0,
 \end{align*}
 we have
 \begin{align*}
 \delta_n&=\frac{9(4 n-1) \left(16 n^3+58 n^2+64 n+19\right)-(4n^3+23n^2+44n+28)\sqrt{B(n)}}{2(n+1)^2(n+2)^2(4n-1)(4n+3)(4n+7)}\\
&\quad+\frac{(4n^3+7n^2+2n-1)\sqrt{C(n)}}{2(n+1)^2(n+2)^2(4n-1)(4n+3)(4n+7)}\\
&>\frac{9(4 n-1) \left(16 n^3+58 n^2+64 n+19\right)}{2(n+1)^2(n+2)^2(4n-1)(4n+3)(4n+7)}\\
&\quad-\frac{(4n^3+23n^2+44n+28)\left(128 n^4+320 n^3+160 n^2-2 n\right)}{2(n+1)^2(n+2)^2(4n-1)(4n+3)(4n+7)}\\
&\quad+\frac{(4n^3+7n^2+2n-1)(128 n^4 + 832 n^3 + 1888 n^2 + 1790 n)}{2(n+1)^2(n+2)^2(4n-1)(4n+3)(4n+7)}\\
&=\frac{1152 n^4+1464 n^3-918 n^2-1626 n-171}{2(n+1)^2(n+2)^2(4n-1)(4n+3)(4n+7)}\\
&>0,\,\,\mbox{for} \,\,n\geq 2.
 \end{align*}
 Thus it follows that $s_{n+1}<Y(n+1)$.

 Now we are to prove the inequality of the left hand side of \eqref{ineq-s_n}.
 Note that
 $$X(n)=\frac{(4 n-1) (4 n+7) \left(10 n^2+10 n+3\right)-\sqrt{B(n)}}{2 (n+1)^2 (4 n-1) (4 n+3)}.$$
 Let
 \begin{equation*}
 L(n)=\frac{(4 n+7) \left(10 n^2+10 n+3\right)}{2 n (n+1)^2 (4 n+3)}.
 \end{equation*}
 Obviously, $L(n)>X(n)$ for all $n\geq 1.$
 Similarly, we can get $s_{n+1}>L(n+1)>X(n+1)$ by induction and the recurrence \eqref{rec-s_n}. We omit the proof here and left it to the interested reader.
 By inductive argument and initial values \eqref{initial-value}, we have \begin{equation}\label{xsy}
 X(n)<s_n<Y(n),\,\, \mbox{for}\,\, \mbox{all}\,\, n\geq 1.
 \end{equation}

 Therefore, by Theorem \ref{Thm:new criterion}, we are done.
 \qed

As a corollary we have
\begin{coro}\label{Cor:S-ratio-limit}
The sequence $\{S_{n+1}/S_n\}_{n\geq 0}$ is strictly increasing, and
$$\lim_{n\rightarrow \infty}\frac{S_{n+1}}{S_n}=9.$$
\end{coro}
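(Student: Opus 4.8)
The plan is to derive both assertions directly from the log-convexity already established in Theorem \ref{Thm:log-convex-S_n} together with the three-term recurrence \eqref{three-rec-SS}. For the monotonicity, I would observe that strict log-convexity means $S_n^2<S_{n-1}S_{n+1}$ for all $n\geq 1$; dividing through by $S_{n-1}S_n>0$ gives exactly $S_n/S_{n-1}<S_{n+1}/S_n$, so the consecutive quotients $s_n=S_n/S_{n-1}$ strictly increase, which is the first claim.

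For the limit I would first secure convergence. Since $\{s_n\}$ is increasing, it suffices to bound it above, and such a bound is already contained in \eqref{xsy}: writing $s_n<Y(n)$ with $Y(n)=\frac{-b(n)+\sqrt{\Delta_n}}{2a(n)}$, one reads off from the explicit forms of $a(n)$, $b(n)$ and $\Delta_n$ the leading asymptotics $-b(n)\sim 160n^4$, $\sqrt{\Delta_n}\sim 128n^4$ and $2a(n)\sim 32n^4$, whence $\lim_{n\to\infty}Y(n)=288/32=9$. Therefore $\{s_n\}$ is increasing and bounded above, so it converges to some limit $L$ with $L\leq 9$.

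To pin down $L$, I would pass to the limit in the recurrence. Dividing \eqref{three-rec-SS} by $S_n$ gives $a(n)s_{n+1}+b(n)+c(n)/s_n=0$; dividing once more by the common leading order $n^4$ and using $a(n)/n^4\to 16$, $b(n)/n^4\to -160$, $c(n)/n^4\to 144$ together with $s_{n+1},s_n\to L$ yields the characteristic equation $16L-160+144/L=0$, i.e. $L^2-10L+9=(L-1)(L-9)=0$. Thus $L\in\{1,9\}$, and since $\{s_n\}$ is increasing with $s_1=7>1$ we must have $L\geq 7$, forcing $L=9$.

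These computations are all routine polynomial asymptotics, and I do not expect a genuine difficulty. The one point requiring care is the \emph{ordering} of the steps: convergence of $\{s_n\}$, obtained from monotonicity plus the upper bound $Y(n)\to 9$, must be established before passing to the limit in the recurrence, because the characteristic equation only tells us that any limit lies in $\{1,9\}$ and does not by itself guarantee that the limit exists. Note also that the two-sided squeeze $X(n)<s_n<Y(n)$ from \eqref{xsy}, with $X(n)\to 1$ and $Y(n)\to 9$, is by itself too weak to identify $L$, which is precisely why the recurrence argument is needed to complete the proof.
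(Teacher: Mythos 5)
Your proof is correct and follows essentially the same route as the paper: strict log-convexity gives the monotonicity of $s_n=S_n/S_{n-1}$, and the limit is identified by passing to the limit in the recurrence \eqref{rec-s_n}, using the bound $s_n<Y(n)\to 9$ from \eqref{xsy} together with $s_1=7$ to select the root $9$ of $s^2-10s+9=0$. If anything, your explicit ordering of the steps --- establishing convergence from monotonicity plus the upper bound \emph{before} taking limits in the recurrence --- is more careful than the paper's presentation, which writes $s=\lim_{n\to\infty}s_n$ before justifying that this limit exists.
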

\begin{proof}By Theorem \ref{Thm:log-convex-S_n}, it follows that $S_n^2<S_{n+1}S_{n-1}$ for  all $n\geq 1$. Equivalently,
 \begin{equation*}
 \frac{S_n}{S_{n-1}}<\frac{S_{n+1}}{S_n},\,\mbox{for}\,\,n\geq 1
 \end{equation*}
 since $\{S_n\}_{n\geq 0}$ is a sequence of positive real numbers.

 Additionally, if we let
\begin{align*}
s_n=\frac{S_{n}}{S_{n-1}},\,\,s=\lim_{n\rightarrow\infty}s_n.
\end{align*}
Then by dividing $n^4S_n$ on both sides of \eqref{three-rec-SS}, it gives the following recurrence for $s_n,$
\begin{equation}\label{rec-s_n}
s_{n+1}=\frac{(4n+7)\left(10n^2+10n+3\right)}{(n+1)^2(4n+3)}
-\frac{9n^2(4n+7)}{(n+1)^2(4n-1)s_n}.
\end{equation}
 Taking $n\rightarrow\infty$ in \eqref{rec-s_n}, it follows that
\begin{equation*}
s^2-10s+9=0.
\end{equation*}
This gives us $s=1$ and $s=9.$ What's more, since $s_n$ is strictly increasing and
$\lim_{n\rightarrow\infty}Y(n)=9$, it follows that
$$s_1=7<\lim_{n\rightarrow\infty}s_{n+1}=\lim_{n\rightarrow\infty}s_{n}\leq 9$$ by \eqref{xsy}. Therefore, it forces $s\lim_{n\rightarrow\infty}s_{n}= 9$.
This completes the proof.
\qed
\end{proof}

As $\{S_n\}_{n\geq 0}$ is a positive sequence, so we can define a new sequence $\{\sqrt[n]{S_n}\}_{n\geq 1}$. Then we have the following result.
\begin{coro}\label{Cor:S_kaifang-increase}
The sequence $\{\sqrt[n]{S_n}\}_{n\geq 1}$ is strictly increasing.  Moreover,
\begin{equation}\label{Lim:kaifang-limit}
\lim_{n\rightarrow\infty}\sqrt[n]{S_n}=9.
\end{equation}
\end{coro}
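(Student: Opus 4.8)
The plan is to exploit the normalization $S_0=1$, which turns $\sqrt[n]{S_n}$ into the geometric mean of the consecutive ratios and makes the monotonicity almost immediate. Writing $s_k=S_k/S_{k-1}$ as in Corollary~\ref{Cor:S-ratio-limit}, the telescoping identity
\begin{equation*}
S_n=S_0\prod_{k=1}^{n}\frac{S_k}{S_{k-1}}=\prod_{k=1}^{n}s_k
\end{equation*}
together with $S_0=1$ gives $\sqrt[n]{S_n}=\bigl(\prod_{k=1}^{n}s_k\bigr)^{1/n}=:G_n$, the geometric mean of $s_1,\dots,s_n$. Thus the whole statement reduces to the elementary fact that the geometric means of a strictly increasing positive sequence are themselves strictly increasing and share its limit.

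For the monotonicity I would argue directly. By Theorem~\ref{Thm:log-convex-S_n} the sequence $\{S_n\}$ is strictly log-convex, so by Corollary~\ref{Cor:S-ratio-limit} the ratios $\{s_k\}$ are strictly increasing. Hence for every $k\le n$ one has $s_k\le s_n<s_{n+1}$, which forces $G_n<s_{n+1}$ since $G_n$ is a geometric mean of the values $s_1,\dots,s_n$, all strictly below $s_{n+1}$. Raising to the $(n+1)$-st power,
\begin{equation*}
G_{n+1}^{\,n+1}=\prod_{k=1}^{n+1}s_k=G_n^{\,n}\,s_{n+1}>G_n^{\,n}\,G_n=G_n^{\,n+1},
\end{equation*}
and therefore $G_{n+1}>G_n$, i.e. $\sqrt[n]{S_n}<\sqrt[n+1]{S_{n+1}}$ for all $n\ge1$.

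For the limit I would pass to logarithms and invoke the Stolz--Ces\`{a}ro averaging principle: Corollary~\ref{Cor:S-ratio-limit} gives $\lim_{k\to\infty}s_k=9$, hence $\lim_{k\to\infty}\log s_k=\log 9$, so the Ces\`{a}ro average $\frac{1}{n}\sum_{k=1}^{n}\log s_k$ converges to $\log 9$ as well. Exponentiating yields $\lim_{n\to\infty}G_n=9$, that is, $\lim_{n\to\infty}\sqrt[n]{S_n}=9$, as claimed in \eqref{Lim:kaifang-limit}.

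I do not anticipate a genuine obstacle; the only point that needs care is the reduction in the first paragraph. It is essential that $S_0=1$: strict log-convexity of $\{S_n\}$ by itself does \emph{not} force $\{\sqrt[n]{S_n}\}$ to increase, since a strictly log-convex positive sequence with a large initial term (so that the factor $S_0^{1/n}$ is present) can have $\sqrt[n]{S_n}$ decrease at the start. It is precisely the normalization $S_0=1$ that identifies $\sqrt[n]{S_n}$ with the geometric mean $G_n$ and removes this difficulty, reducing everything to the two standard facts about geometric means used above.
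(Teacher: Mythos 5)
Your proof is correct and follows essentially the same route as the paper: both exploit $S_0=1$ to write $\sqrt[n]{S_n}$ as the geometric mean of the strictly increasing ratios $s_k$ (your inequality $G_n<s_{n+1}$, raised to powers, is exactly the paper's $S_n^{n+1}<S_{n+1}^n$), and both deduce the limit from the ratio limit in Corollary \ref{Cor:S-ratio-limit}. The only cosmetic difference is in the last step, where the paper cites Rudin's ratio--root inequalities \eqref{in-1} and \eqref{in-2} while you invoke Ces\`{a}ro averaging of $\log s_k$; in this setting these are the same standard fact.
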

\begin{proof}
By Corollary \ref{Cor:S-ratio-limit}, we have
\begin{equation*}
\frac{S_{n+1}}{S_n}>\frac{S_n}{S_{n-1}},\,\,\mbox{for}\,\,n\geq 1.
\end{equation*}
With the fact $S_0=1$, we deduce that
$$S_n=S_0\cdot \frac{S_1}{S_0}\cdot \frac{S_2}{S_1}\cdots\frac{S_n}{S_{n-1}}<\left(\frac{S_{n+1}}{S_n}\right)^n, $$
which implies $$S_n^{n+1}<S_{n+1}^n.$$
This is equivalent to $$(S_n^{n+1})^{\frac{1}{n(n+1)}}<(S_{n+1}^n)^{\frac{1}{n(n+1)}},$$
that is,
$$\sqrt[n]{S_n}<\sqrt[n+1]{S_{n+1}}.$$

Additionally,
 consider that for a real sequence $\{z_n\}_{n=1}^\infty$ with positive numbers, it was shown that
\begin{equation}\label{in-1}
\lim_{n\rightarrow\infty}\inf\frac{z_{n+1}}{z_n}\leq \lim_{n\rightarrow\infty}\inf\sqrt[n]{z_n}, \end{equation}
and
\begin{equation}\label{in-2}
\lim_{n\rightarrow\infty}\sup\sqrt[n]{z_n}\leq \lim_{n\rightarrow\infty}\sup\frac{z_{n+1}}{z_n}, \end{equation}
see Rudin \cite[\S 3.37]{R}. The inequalities in \eqref{in-1} and \eqref{in-2} implies that
$$\lim_{n\rightarrow\infty}\sqrt[n]{z_n}=
\lim_{n\rightarrow\infty}\frac{z_n}{z_{n-1}}$$ if $\lim_{n\rightarrow\infty}\frac{z_n}{z_{n-1}}$ exists. By Corollary \ref{Cor:S-ratio-limit}, it follows \eqref{Lim:kaifang-limit}.

This completes the proof.

\qed
\end{proof}
\subsubsection{Proof of Theorem \ref{Thm:log-convex-S_n} via interlacing method }
Before using the interlacing method, we need a lemma.
\begin{lemm}\label{Lem:h(n-1)-S_n-h(n)}
Let
$$h(n)=9-\frac{9}{2n^2}.$$
Then, for $n\geq 2$, we have $$h(n-1)<s_n<h(n).$$
\end{lemm}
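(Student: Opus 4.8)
The plan is to prove the double inequality $h(n-1)<s_n<h(n)$ by induction on $n$, using the recurrence \eqref{rec-s_n} for the consecutive ratios $s_n=S_n/S_{n-1}$ as the engine. The function $h(n)=9-9/(2n^2)$ is manifestly increasing in $n$ and tends to $9$, so once the sandwich is established it will feed directly into Proposition \ref{Prop:interlacing method} (with $b_n=h(n)$) to give log-convexity. First I would check the base case numerically: since $s_2=S_2/S_1$ can be computed directly from \eqref{Seq-S}, I would verify that $h(1)<s_2<h(2)$, i.e. that $s_2$ lands strictly between $9/2$ and $9-9/8=63/8$.

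For the inductive step I would assume $h(n-1)<s_n<h(n)$ and push the bound through \eqref{rec-s_n}. The crucial observation is that the right-hand side of \eqref{rec-s_n}, viewed as a function of $s_n$, is \emph{increasing} in $s_n$ (the coefficient of $1/s_n$ is negative since $9n^2(4n+7)/[(n+1)^2(4n-1)]>0$). Hence monotonicity lets me transport the inequalities cleanly: to get the upper bound $s_{n+1}<h(n+1)$ I substitute the inductive upper bound $s_n<h(n)$ into \eqref{rec-s_n}, and to get the lower bound $s_{n+1}>h(n)$ I substitute $s_n>h(n-1)$. Thus the proof reduces to two polynomial inequalities: one asserting that evaluating the recurrence's right-hand side at $s_n=h(n)$ stays below $h(n+1)$, and one asserting that evaluating at $s_n=h(n-1)$ stays above $h(n)$.

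Each of these reduces, after clearing the positive denominators $(n+1)^2(4n+3)(4n-1)$ and the factors $2(n-1)^2$, $2n^2$ coming from $h$, to verifying that an explicit polynomial in $n$ is positive for all $n\geq 2$. I would carry this out by writing the difference $h(n+1)-(\text{RHS at }h(n))$ as a single rational function, confirming its numerator is a polynomial with positive leading coefficient, and checking positivity on $n\geq 2$ either by exhibiting all coefficients as positive after a shift $n\mapsto n+2$, or by bounding the lower-order terms against the dominant ones. The same routine handles the lower-bound polynomial.

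The main obstacle I anticipate is purely the bookkeeping in these two polynomial positivity checks: substituting a rational function $h(n)=9-9/(2n^2)$ into \eqref{rec-s_n} produces denominators of degree up to $n^2$ nested inside the recurrence's own rational coefficients, so the cleared numerators will be polynomials of fairly high degree whose positivity for small $n$ must be confirmed by hand (or symbolically) rather than read off instantly. I would manage this by factoring out obvious positive factors first and reducing the threshold to the smallest $n$ for which the leading-term domination argument applies, treating any finitely many remaining small cases by direct evaluation, exactly as the companion estimate $\delta_n>0$ was handled in the first proof of Theorem \ref{Thm:log-convex-S_n}.
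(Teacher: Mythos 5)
Your proposal matches the paper's own proof essentially step for step: the same induction with base case $h(1)<s_2=55/7<h(2)$, the same use of monotonicity of the right-hand side of \eqref{rec-s_n} in $s_n$ to substitute $h(n)$ for the upper bound and $h(n-1)$ for the lower bound, and the same reduction to two explicit rational-function positivity checks (which in the paper turn out to have quite manageable numerators, e.g. $-88n^2-40n+15<0$ for the upper bound). The approach is correct and identical to the paper's.
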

\begin{proof}
By induction, as $h(1)=\frac{9}{2}<s_2=\frac{55}{7}<h(2)=\frac{63}{8}$, so the basic step is valid.
Suppose that $h(n-1)<s_n<h(n)$ for all $n\geq 2$, we proceed to show that $h(n)<s_{n+1}<h(n+1).$

On one hand, note that
\begin{equation}\label{ineq-ratio-lc-S-n-1}
\begin{split}
s_{n+1}&=\frac{(4n+7)\left(10n^2+10n+3\right)}{(n+1)^2(4n+3)}
-\frac{9n^2(4n+7)}{(n+1)^2(4n-1)s_n}\\
&<\frac{4n+7}{(n+1)^2}\left(\frac{10n^2+10n+3}{4n+3}
-\frac{9n^2}{(4n-1)h(n)}\right)\\
&=\frac{4n+7}{(n+1)^2}\left(\frac{72 n^5+54 n^4-36 n^3-36 n^2-2 n+3}{(4 n-1) (4 n+3) \left(2 n^2-1\right)}\right),
\end{split}
\end{equation}
and
\begin{equation}\label{ineq-ratio-lc-S-n-2}
\begin{split}
&\frac{4n+7}{(n+1)^2}\left(\frac{72 n^5+54 n^4-36 n^3-36 n^2-2 n+3}{(4 n-1) (4 n+3) \left(2 n^2-1\right)}\right)-h(n+1)\\
&=\frac{-88 n^2-40 n+15}{2 (n+1)^2 (4 n-1) (4 n+3) \left(2 n^2-1\right)}\\
&<0,\,\,\mbox{for}\,\,n\geq 1.
\end{split}
\end{equation}
Obviously, the above inequality \eqref{ineq-ratio-lc-S-n-1} and \eqref{ineq-ratio-lc-S-n-2} implies $s_{n+1}<h(n+1).$

On the other hand, consider that
\begin{equation}\label{ineq-ratio-lc-S-n-11}
\begin{split}
s_{n+1}&=\frac{(4n+7)\left(10n^2+10n+3\right)}{(n+1)^2(4n+3)}
-\frac{9n^2(4n+7)}{(n+1)^2(4n-1)s_n}\\
&>\frac{4n+7}{(n+1)^2}\left(\frac{10n^2+10n+3}{4n+3}
-\frac{9n^2}{(4n-1)h(n-1)}\right)\\
&=\frac{(4 n+7) \left(72 n^5-90 n^4-72 n^3+10 n^2+14 n-3\right)}{(n+1)^2 (4 n-1) (4 n+3) \left(2 n^2-4 n+1\right)},
\end{split}
\end{equation}
and
\begin{equation}\label{ineq-ratio-lc-S-n-12}
\begin{split}
&\frac{(4 n+7) \left(72 n^5-90 n^4-72 n^3+10 n^2+14 n-3\right)}{(n+1)^2 (4 n-1) (4 n+3) \left(2 n^2-4 n+1\right)}\\
&=\frac{512 n^5-792 n^4-728 n^3+147 n^2+126 n-27}{2 n^2 (n+1)^2 (4 n-1) (4 n+3) \left(2 n^2-4 n+1\right)}\\
&>0,\,\,\mbox{for}\,\,n\geq 1.
\end{split}
\end{equation}
It follows from \eqref{ineq-ratio-lc-S-n-11} and \eqref{ineq-ratio-lc-S-n-12} that
$s_n>h(n-1).$

According to inductive argument, it follows
$$h(n-1)<s_n<h(n),\,\,\mbox{for}\,\, n\geq 2.$$
\qed
\end{proof}
\begin{remark}Corollary \ref{Cor:S-ratio-limit} can be proved easily by invoking this lemma.
\end{remark}
\emph{Proof of Theorem \ref{Thm:log-convex-S_n}.} Clearly, $\{h_n\}_{n\geq 1}$ is an increasing sequence. By Lemma \ref{Lem:h(n-1)-S_n-h(n)}
and Proposition \ref{Prop:interlacing method}, it follows that $\{S_n\}_{n\geq 0}$ is log-convex.
\subsection{The sequences $\{S_n/S_{n-1}\}_{n\geq 1}$ and $\{\sqrt[n]{S_n}\}_{n\geq 1}$ are log-concave}
In this subsection, we will prove the log-concavity of the sequence $\{S_n\}_{n\geq 0}$. The following Theorem \ref{Thm:cgw-ratio-lc} and Theorem \ref{Thm:Chen-Guo-Wang} are indispensable for proving our results.
\begin{theo}[\cite{cgw}]\label{Thm:cgw-ratio-lc}
Let $\{z_n\}_{n\geq 0}$ be the sequence defined by the following recurrence
$$z_n=u(n)z_{n-1}+v(n)z_{n-2}.$$ Assume that $v(n)<0$ for $n\geq 2$.
If there exists a nonnegative integer $N$ and a function $h(n)$ such that for
all $n\geq N+2$,
\begin{itemize}
\item[(i)] $\frac{3u(n)}{4}\leq \frac{z_n}{z_{n-1}}\leq h(n)$;
\item[(ii)]$h(n)^4-u(n)h(n)^3-u(n+1)v(n)h(n)-v(n)v(n+1)<0,$
\end{itemize}
then $\{z_n\}_{n\geq 0}$ is ratio log-concave.
\end{theo}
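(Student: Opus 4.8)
The plan is to reduce ratio log-concavity to a single polynomial inequality in the consecutive quotient $r_n := z_n/z_{n-1}$ and then exploit the two-sided bound in condition (i). Since ratio log-concavity of $\{z_n\}$ means exactly that the quotient sequence $\{r_n\}$ is log-concave, i.e. $r_{n-1}r_{n+1}\le r_n^2$, I would first rewrite the recurrence in quotient form. Dividing $z_n=u(n)z_{n-1}+v(n)z_{n-2}$ by $z_{n-1}$ gives $r_n=u(n)+v(n)/r_{n-1}$, whence $r_{n-1}=v(n)/(r_n-u(n))$ and $r_{n+1}=u(n+1)+v(n+1)/r_n$. I would also record two sign facts: positivity of the sequence together with $v(n)<0$ forces $u(n)>0$ (from $u(n)z_{n-1}=z_n-v(n)z_{n-2}>0$), and, crucially, $r_n=u(n)+v(n)/r_{n-1}<u(n)$, so that $r_n-u(n)<0$.

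Substituting these expressions into $r_{n-1}r_{n+1}\le r_n^2$ and clearing denominators---taking care that multiplication by $r_n\bigl(r_n-u(n)\bigr)<0$ reverses the inequality---I expect the target to collapse to $g_n(r_n)\le 0$, where
$$g_n(x)=x^4-u(n)x^3-u(n+1)v(n)x-v(n)v(n+1).$$
This is precisely the quartic appearing in condition (ii), so condition (ii) reads $g_n(h(n))<0$.

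The key step is then to show that $g_n$ is increasing on the interval where $r_n$ lives. Differentiating,
$$g_n'(x)=4x^3-3u(n)x^2-u(n+1)v(n)=x^2\bigl(4x-3u(n)\bigr)-u(n+1)v(n).$$
Here the lower bound $\tfrac{3u(n)}{4}$ in condition (i) plays its decisive role: for $x\ge \tfrac{3u(n)}{4}$ the factor $4x-3u(n)$ is nonnegative, so $x^2\bigl(4x-3u(n)\bigr)\ge 0$, while $-u(n+1)v(n)>0$ because $u(n+1)>0$ and $v(n)<0$; hence $g_n'(x)>0$ on $[\tfrac{3u(n)}{4},\infty)$. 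Combining this monotonicity with $\tfrac{3u(n)}{4}\le r_n\le h(n)$ yields $g_n(r_n)\le g_n(h(n))<0$ by condition (ii), which is the desired inequality. Running this argument for every $n\ge N+2$ gives $r_{n-1}r_{n+1}\le r_n^2$ eventually, establishing ratio log-concavity.

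I do not expect a serious obstacle here; the real content is recognizing that the inequality reduces exactly to the quartic $g_n$ of condition (ii), and that the constant $\tfrac34$ is calibrated precisely so that $4x-3u(n)$ is the factor whose sign changes at the lower bound, forcing $g_n'>0$. The one genuinely delicate point is the sign bookkeeping: one must justify $r_n-u(n)<0$ and $u(n),u(n+1)>0$ from positivity of $\{z_n\}$ and $v<0$, and then handle the inequality reversal correctly when clearing the negative denominator $r_n\bigl(r_n-u(n)\bigr)$. Once those sign facts are secured, both the reduction to $g_n(r_n)\le 0$ and the monotonicity argument are routine.
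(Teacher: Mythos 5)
Your proof is correct. Note that the paper offers no proof of this statement at all---it is quoted as a known result from Chen--Guo--Wang \cite{cgw}---so there is nothing in the paper to diverge from; your argument (rewriting the recurrence in ratio form $r_n=u(n)+v(n)/r_{n-1}$, using positivity and $v(n)<0$ to get $u(n)>0$ and $r_n<u(n)$, clearing the negative factor $r_n(r_n-u(n))$ to reduce $r_{n-1}r_{n+1}\le r_n^2$ exactly to $g_n(r_n)\le 0$ with $g_n(x)=x^4-u(n)x^3-u(n+1)v(n)x-v(n)v(n+1)$, and then using $g_n'(x)=x^2(4x-3u(n))-u(n+1)v(n)>0$ on $[\tfrac{3}{4}u(n),\infty)$ so that condition (i) and condition (ii) give $g_n(r_n)\le g_n(h(n))<0$) is precisely the standard proof of the cited theorem, with the sign bookkeeping handled correctly.
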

\begin{theo}[\cite{cgw}]
\label{Thm:Chen-Guo-Wang}
Assume that $k$ is a positive integer. If a sequence $\{z_n\}_{n\geq k}$ is ratio log-concave and
\begin{equation*}
\frac{\sqrt[k+1]{z_{k+1}}}{\sqrt[k]{z_k}}
>\frac{\sqrt[k+2]{z_{k+2}}}{\sqrt[k+1]{z_{k+1}}},
\end{equation*}
then the sequence $\{\sqrt[n]{z_n}\}_{n\geq k}$ is strictly log-concave.
\end{theo}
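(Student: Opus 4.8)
The plan is to pass to logarithms and recast the conclusion as a monotonicity statement. Writing $a_n=\log z_n$, the sequence $\{\sqrt[n]{z_n}\}_{n\ge k}$ is strictly log-concave exactly when
$$c_n=\frac{a_{n+1}}{n+1}-\frac{a_n}{n}=\log\frac{\sqrt[n+1]{z_{n+1}}}{\sqrt[n]{z_n}}$$
is strictly decreasing in $n$, and the hypothesis $\sqrt[k+1]{z_{k+1}}/\sqrt[k]{z_k}>\sqrt[k+2]{z_{k+2}}/\sqrt[k+1]{z_{k+1}}$ is nothing but the single instance $c_k>c_{k+1}$. So I would aim to prove $c_n>c_{n+1}$ for every $n\ge k$, using that one inequality as an anchor together with ratio log-concavity.

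The engine of the argument is a clean auxiliary quantity. I set $\phi(n)=n\,a_{n+1}-(n+1)a_n$, so that $c_n=\phi(n)/\big(n(n+1)\big)$, and I let $e_n=a_{n+1}-2a_n+a_{n-1}$ denote the second difference of $a_n$. Ratio log-concavity says precisely that the first differences $d_n=a_{n+1}-a_n=\log(z_{n+1}/z_n)$ form a concave sequence, that is, $\{e_n\}$ is non-increasing. A short computation yields the identity $\phi(n)-\phi(n-1)=n\,e_n$, hence after telescoping $\phi(n)=\phi(k)+\sum_{j=k+1}^{n}j\,e_j$. A second short computation reduces the target $c_n-c_{n+1}>0$ to positivity of the numerator
$$N_n=2\phi(n)-n(n+1)\,e_{n+1}.$$

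With these in hand the proof closes by a telescoping estimate rather than a delicate step-by-step induction. Since $\{e_n\}$ is non-increasing, $e_j\ge e_{n+1}$ for $k+1\le j\le n$, so $\sum_{j=k+1}^{n}j\,e_j\ge e_{n+1}\sum_{j=k+1}^{n}j=e_{n+1}\cdot\tfrac12(k+1+n)(n-k)$. Substituting into $N_n$ and using the algebraic collapse $(k+1+n)(n-k)-n(n+1)=-k(k+1)$, one gets $N_n\ge 2\phi(k)-k(k+1)\,e_{n+1}$. Finally $e_{n+1}\le e_{k+1}$ allows me to replace $e_{n+1}$ by $e_{k+1}$, so that $N_n\ge 2\phi(k)-k(k+1)\,e_{k+1}=N_k$, and $N_k>0$ is exactly the assumed initial inequality $c_k>c_{k+1}$ (the denominator $n(n+1)(n+2)$ being positive). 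Thus $N_n>0$ for all $n\ge k$, which is the desired strict log-concavity.

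I expect the main obstacle to be conceptual rather than computational: the ratio $c_n$ genuinely depends on the absolute values $a_n$ and not merely on the increments $d_n$, so concavity of $\{d_n\}$ by itself cannot force $\{c_n\}$ to decrease — this is precisely why the separate initial hypothesis is indispensable. The device that overcomes this is telescoping $\phi$ all the way back to the base index $k$, so that the absolute scale is absorbed into $\phi(k)$; one must then verify that the weighted sum $\sum j\,e_j$ can be bounded below by its endpoint value $e_{n+1}$ and that the leftover constant $(k+1+n)(n-k)-n(n+1)$ simplifies exactly to $-k(k+1)$, which is what makes the estimate close up cleanly against the anchor $N_k$.
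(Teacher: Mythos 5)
Your argument is correct, and each step checks out. With $a_n=\log z_n$, $\phi(n)=n\,a_{n+1}-(n+1)\,a_n$ and $e_n=a_{n+1}-2a_n+a_{n-1}$, one indeed has $\phi(n)-\phi(n-1)=n\,e_n$ and $c_n-c_{n+1}=N_n/\bigl(n(n+1)(n+2)\bigr)$ with $N_n=2\phi(n)-n(n+1)e_{n+1}$; ratio log-concavity of $\{z_n\}_{n\ge k}$ is precisely the statement that $\{e_j\}_{j\ge k+1}$ is non-increasing, so the weighted bound $\sum_{j=k+1}^{n} j\,e_j\ge \tfrac12(n+k+1)(n-k)\,e_{n+1}$ is legitimate (positive weights, monotone $e_j$), the collapse $(n+k+1)(n-k)-n(n+1)=-k(k+1)$ is an identity, and the replacement of $e_{n+1}$ by $e_{k+1}$ against the negative coefficient $-k(k+1)$ flips the inequality the right way, giving $N_n\ge N_k>0$, where $N_k>0$ is exactly the hypothesis $c_k>c_{k+1}$. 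The strictness bookkeeping is also right: only the anchor inequality need be strict, which matches the non-strict ratio log-concavity assumed in the statement.

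One caveat about the comparison you were asked to make: this paper contains no proof of Theorem \ref{Thm:Chen-Guo-Wang} at all; it is quoted verbatim from Chen, Guo and Wang \cite{cgw} and used as a black box to deduce Theorem \ref{kaifang-lc-cor}. So the only meaningful comparison is with that external source, not with anything internal to the paper. Your additive reformulation (logarithms, second differences, telescoping $\phi$ back to the anchor index $k$) is self-contained and elementary, and it isolates the conceptual point you flag at the end: concavity of the increments $d_n$ alone cannot control $c_n$, because $c_n$ sees the absolute scale of $a_n$, and in your argument that scale is absorbed entirely into $\phi(k)$, which is exactly where the indispensable initial inequality enters. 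Including your argument would make the quoted theorem self-contained within this paper.
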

Now we can give and prove our results.
\begin{theo}\label{S_n-ratio-lc-thm}
The sequence $\{S_n\}_{n\geq 0}$ is ratio log-concave, i.e., the sequence $\{S_n/S_{n-1}\}_{n\geq 1}$ is log-concave.
\end{theo}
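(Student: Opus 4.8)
The plan is to apply the Chen--Guo--Wang criterion, Theorem \ref{Thm:cgw-ratio-lc}, to the sequence $\{S_n\}$. First I would recast the three-term recurrence \eqref{three-rec-SS} into the normalized form $S_n=u(n)S_{n-1}+v(n)S_{n-2}$ demanded by that theorem. Replacing $n$ by $n-1$ in \eqref{three-rec-SS} and dividing through by the coefficient of $S_n$ (and cancelling the common factors $4n-5$ and $4n-1$) yields
\begin{align*}
u(n)&=\frac{(4n+3)\left(10n^2-10n+3\right)}{n^2(4n-1)},\\
v(n)&=-\frac{9(n-1)^2(4n+3)}{n^2(4n-5)}.
\end{align*}
In particular $v(n)<0$ for all $n\geq 2$, so the standing hypothesis $v(n)<0$ of Theorem \ref{Thm:cgw-ratio-lc} is met.

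Next I would dispatch condition (i), namely $\tfrac34 u(n)\leq s_n\leq h(n)$, where $s_n=S_n/S_{n-1}$ and $h(n)=9-\tfrac{9}{2n^2}$ is exactly the function supplied by Lemma \ref{Lem:h(n-1)-S_n-h(n)}. The upper bound $s_n<h(n)$ is precisely the right-hand inequality of that lemma, so nothing new is needed there. For the lower bound I would invoke the left-hand inequality $s_n>h(n-1)$ of the same lemma, reducing matters to the purely rational inequality $\tfrac34 u(n)\leq h(n-1)$. Clearing denominators turns this into a polynomial inequality in $n$; since $\tfrac34 u(n)\to\tfrac{15}{2}$ while $h(n-1)\to 9$, the inequality is comfortable for large $n$, and I would finish it by verifying the finitely many small values directly.

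The real work is condition (ii): with the same $h(n)$ I must show
\[
G(n):=h(n)^4-u(n)h(n)^3-u(n+1)v(n)h(n)-v(n)v(n+1)<0
\]
for all sufficiently large $n$, and here lies the main obstacle. Substituting the explicit rational functions and clearing the common denominator---a product of positive factors such as $n$, $(n+1)$, $(4n-1)$, $(4n-5)$ and $(4n+3)$ raised to suitable powers---writes $G(n)$ as a single rational function whose denominator is manifestly positive for $n\geq 2$, so the sign of $G(n)$ is governed by the numerator polynomial $P(n)$ alone. The delicacy is that the leading behaviour cancels: as $n\to\infty$ one has $h(n)\to 9$, $u(n)\to 10$, $v(n)\to -9$, and the naive leading constant $9^4-10\cdot 9^3+10\cdot 9\cdot 9-81=0$. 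Thus $P(n)$ has strictly smaller degree than a crude count would suggest, and I must carry the next-order terms to pin down its sign. I would expand $P(n)$, extract its genuine leading coefficient (which I expect to be negative), conclude $P(n)<0$ for $n\geq N$ with an explicit $N$, and then settle the remaining cases $2\leq n<N$ by direct evaluation.

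Finally, with conditions (i) and (ii) verified for all $n\geq N+2$, Theorem \ref{Thm:cgw-ratio-lc} immediately yields that $\{S_n\}_{n\geq 0}$ is ratio log-concave, which is precisely the assertion that $\{S_n/S_{n-1}\}_{n\geq 1}$ is log-concave. This completes the plan.
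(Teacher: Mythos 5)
Your proposal is correct and takes essentially the same route as the paper: the same Chen--Guo--Wang criterion with the same $u(n)$ and $v(n)$, the same bound $h(n)=9-\frac{9}{2n^2}$ from Lemma \ref{Lem:h(n-1)-S_n-h(n)}, the reduction of condition (i) to the rational inequality $\frac{3}{4}u(n)\leq h(n-1)$, and the same denominator-clearing sign analysis for condition (ii), where the paper's explicit computation confirms your expected negative leading behaviour (its numerator is $-D(n)$ with $D(n)$ of positive leading coefficient $331776$). Incidentally, your explicitly negative $v(n)$ tidies up a small sign/typo slip in the paper's displayed form of the recurrence.
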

\begin{proof}  We prove it by using Theorem \ref{Thm:cgw-ratio-lc}. The recurrence relation \eqref{three-rec-SS} implies that
\begin{equation*}
S_n=\frac{(4 n+3) \left(10 n^2-10 n+3\right)}{n^2 (4 n-1)}S_{n-1}-\frac{9 (n-1)^2 (4 n+3)}{n^2 (4 n-5)}S_{n-1}.
\end{equation*}
To keep notation in Theorem \ref{Thm:cgw-ratio-lc}, here we still let
\begin{align*}
u(n)=\frac{(4 n+3) \left(10 n^2-10 n+3\right)}{n^2 (4 n-1)},\,\,v(n)=\frac{9 (n-1)^2 (4 n+3)}{n^2 (4 n-5)}.
\end{align*}

Consider that
\begin{align*}
\frac{3u(n)}{4}-h(n-1)&=-\frac{3 \left(8 n^5-18 n^4+6 n^3-41 n^2+36 n-9\right)}{4 n^2 (n-1)^2 (4 n-1)}\\
&<0,\,\,\mbox{for}\,\,n\geq 3,
\end{align*}
which implies that
\begin{equation}\label{lower-B-ratio-lc-S-n}
\frac{3u(n)}{4}<h(n-1),\, n\geq 3.
\end{equation}

Additionally,
\begin{equation}\label{ratio-condition-ii}
\begin{split}
&h(n)^4-u(n)h(n)^3-u(n+1)v(n)h(n)-v(n)v(n+1)\\
&=\frac{-D(n)}{16 n^8 (n+1)^2 (4 n-5) (4 n-1)}\\
&<0,\,\,\mbox{for}\,\,n\geq 1,
\end{split}
\end{equation}
where
\begin{equation*}
\begin{split}
D(n)&=331776 n^8-393984 n^7-693360 n^6+524232 n^5+581256 n^4-242028 n^3\\
&\quad-223803 n^2+39366 n+32805.
\end{split}
\end{equation*}
By Lemma \ref{Lem:h(n-1)-S_n-h(n)}, and combining the inequalities \ref{lower-B-ratio-lc-S-n} and \ref{ratio-condition-ii}, it follows
that the sequence $\{S_n\}_{n\geq 0}$ is ratio log-concave by invoking
Theorem \ref{Thm:cgw-ratio-lc}.
\qed
\end{proof}

\begin{theo}\label{kaifang-lc-cor}
The sequence $\{\sqrt[n]{S_n}\}_{n\geq 1}$ is strictly log-concave.
\end{theo}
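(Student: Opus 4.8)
The plan is to obtain this as a short consequence of the two results established immediately above, namely the ratio log-concavity of $\{S_n\}$ (Theorem \ref{S_n-ratio-lc-thm}) and the criterion of Chen, Guo and Wang (Theorem \ref{Thm:Chen-Guo-Wang}). The latter converts ratio log-concavity of $\{z_n\}_{n\geq k}$ into strict log-concavity of $\{\sqrt[n]{z_n}\}_{n\geq k}$ the moment a single anchoring inequality, involving only $z_k,z_{k+1},z_{k+2}$, is verified at the bottom of the range. Thus the whole argument reduces to fixing the correct starting index and checking one explicit inequality; no new recurrence manipulation is needed.

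First I would take $k=1$ and record that $\{S_n\}_{n\geq 1}$ is ratio log-concave. This is immediate from Theorem \ref{S_n-ratio-lc-thm}, which asserts ratio log-concavity of the full sequence $\{S_n\}_{n\geq 0}$; passing to the tail from $n=1$ preserves this property. Next I would verify the anchoring hypothesis of Theorem \ref{Thm:Chen-Guo-Wang} at $k=1$, that is,
\[
\frac{\sqrt[2]{S_2}}{\sqrt[1]{S_1}}>\frac{\sqrt[3]{S_3}}{\sqrt[2]{S_2}}.
\]
Reading off the initial values $S_1=7$, $S_2=55$ and $S_3=465$ directly from \eqref{Seq-S}, this radical inequality is equivalent, after multiplying through by $\sqrt{S_2}$ and cubing, to the clean integer inequality $S_2^3>S_1^3S_3$, i.e. $55^3=166375>159495=7^3\cdot 465$, which plainly holds.

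With both hypotheses verified, Theorem \ref{Thm:Chen-Guo-Wang} applied with $k=1$ yields at once that $\{\sqrt[n]{S_n}\}_{n\geq 1}$ is strictly log-concave, which is the assertion. The only point demanding care—and hence the main obstacle—is the bookkeeping on indices: one must make sure that the ratio log-concavity supplied by Theorem \ref{S_n-ratio-lc-thm} already covers the range needed to start at $k=1$ (so that no small-$n$ cases are left uncovered, which would force verifying strict log-concavity of $\sqrt[n]{S_n}$ by hand for those $n$), and that the anchoring inequality is tested at precisely this value of $k$. Once the starting index is pinned down in this way, everything else is a routine evaluation of $S_1,S_2,S_3$.
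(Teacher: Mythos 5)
Your proposal is correct and follows essentially the same route as the paper: apply Theorem \ref{Thm:Chen-Guo-Wang} with $k=1$ to the ratio log-concavity furnished by Theorem \ref{S_n-ratio-lc-thm}, and verify the anchoring inequality $\sqrt{S_2}/S_1>\sqrt[3]{S_3}/\sqrt{S_2}$ from the initial values $S_1=7$, $S_2=55$, $S_3=465$. Your reduction to the integer inequality $S_2^3=166375>159495=S_1^3S_3$ is just a cleaner form of the paper's sixth-power computation $\bigl(\sqrt{55}/7\bigr)^6-\bigl(\sqrt[3]{93}/(\sqrt[6]{5}\sqrt{11})\bigr)^6=\frac{89679424}{782954095}>0$.
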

\begin{proof}
Since
$$\left(\frac{\sqrt{55}}{7}\right)^6-\left(\frac{\sqrt[3]{93}}{\sqrt[6]{5} \sqrt{11}}\right)^6=\frac{89679424}{782954095}>0,$$
it follows that
$$\frac{\sqrt{S_2}}{S_1}=\frac{\sqrt{55}}{7}>
\frac{\sqrt{S_3}}{S_2}=\frac{\sqrt[3]{93}}{\sqrt[6]{5} \sqrt{11}}.$$
By Theorem \ref{S_n-ratio-lc-thm} and \ref{Thm:Chen-Guo-Wang}, we can deduce this result.
\qed
\end{proof}
This implies the following result.
\begin{coro}\label{kaifang-dec-cor}
The sequence $\{\frac{\sqrt[n+1]{S_{n+1}}}{\sqrt[n]{S_n}}\}_{n\geq 1}$ is strictly decreasing.
\end{coro}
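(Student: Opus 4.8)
The plan is to read off the corollary directly from Theorem \ref{kaifang-lc-cor} via the standard equivalence, used repeatedly in Section \ref{Sec-2:the interlacing method}, between strict log-concavity of a positive sequence and the strict decrease of its sequence of consecutive ratios. Since the substantive work has already been carried out in establishing the strict log-concavity of $\{\sqrt[n]{S_n}\}_{n\geq 1}$, essentially no further computation will be needed.

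First I would set $w_n=\sqrt[n]{S_n}$ for $n\geq 1$, so that $\{w_n\}_{n\geq 1}$ is a sequence of positive numbers. By Theorem \ref{kaifang-lc-cor} it is strictly log-concave, which by the definition in \eqref{Ineq:log-cv and log-cc} means
\begin{equation*}
w_{n-1}\,w_{n+1}<w_n^2\qquad\text{for all }n\geq 2.
\end{equation*}

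Next I would convert this into a monotonicity statement about ratios. Because every $w_n$ is strictly positive, dividing through by $w_{n-1}w_n>0$ gives
\begin{equation*}
\frac{w_{n+1}}{w_n}<\frac{w_n}{w_{n-1}}\qquad\text{for all }n\geq 2.
\end{equation*}
Setting $r_n=\dfrac{w_{n+1}}{w_n}=\dfrac{\sqrt[n+1]{S_{n+1}}}{\sqrt[n]{S_n}}$, this last inequality is exactly $r_n<r_{n-1}$ for all $n\geq 2$, i.e. the sequence $\{r_n\}_{n\geq 1}$ is strictly decreasing, which is the assertion of the corollary.

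There is no genuine obstacle to surmount here: the entire content has been packaged inside Theorem \ref{kaifang-lc-cor}, and the present statement is merely its translation through the log-concavity/decreasing-ratio dictionary. The only point deserving the slightest care is the index bookkeeping---log-concavity evaluated at index $n$ compares the ratios $r_{n-1}$ and $r_n$---which I would verify explicitly so that the ranges of $n$ line up, but this is entirely routine.
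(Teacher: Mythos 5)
Your proposal is correct and is exactly the paper's argument: the paper gives no separate proof of this corollary, simply noting after Theorem \ref{kaifang-lc-cor} that strict log-concavity of $\{\sqrt[n]{S_n}\}_{n\geq 1}$ ``implies the following result,'' which is precisely the log-concavity/decreasing-ratio dictionary you spell out. Your explicit index bookkeeping ($w_{n-1}w_{n+1}<w_n^2$ for $n\geq 2$ giving $r_n<r_{n-1}$ for $n\geq 2$) is a faithful and complete rendering of that implication.
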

\begin{theo}\label{Thm:kaifangsn-limit}
$$\lim_{n\rightarrow \infty}\frac{\sqrt[n+1]{S_{n+1}}}{\sqrt[n]{S_n}}=1.$$
\end{theo}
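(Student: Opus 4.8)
The plan is to deduce this limit directly from the convergence of $\{\sqrt[n]{S_n}\}$ already established in Corollary \ref{Cor:S_kaifang-increase}. Writing $a_n=\sqrt[n]{S_n}$, the quantity in question is precisely the ratio $a_{n+1}/a_n$, so the entire statement reduces to the elementary algebra of limits, provided the numerator and denominator each converge to a common nonzero value. That both do is exactly what \eqref{Lim:kaifang-limit} supplies.

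Concretely, I would first invoke equation \eqref{Lim:kaifang-limit}, which gives $\lim_{n\to\infty} a_n = \lim_{n\to\infty}\sqrt[n]{S_n} = 9$. Next I would observe that the sequence $\{\sqrt[n+1]{S_{n+1}}\}_{n\geq 1}$ is merely $\{a_m\}_{m\geq 2}$ re-indexed by $m=n+1$, so it converges to the same limit, namely $\lim_{n\to\infty}\sqrt[n+1]{S_{n+1}} = 9$. Since the denominator has limit $9\neq 0$, the quotient rule for convergent sequences applies and yields
$$
\lim_{n\to\infty}\frac{\sqrt[n+1]{S_{n+1}}}{\sqrt[n]{S_n}}
=\frac{\lim_{n\to\infty}\sqrt[n+1]{S_{n+1}}}{\lim_{n\to\infty}\sqrt[n]{S_n}}
=\frac{9}{9}=1 .
$$

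There is no genuine obstacle remaining at this final stage: the substantive work, namely proving that $\sqrt[n]{S_n}$ converges at all and that its limit equals $9$, was already carried out in Corollaries \ref{Cor:S-ratio-limit} and \ref{Cor:S_kaifang-increase}, resting on the log-convexity of $\{S_n\}$ together with the recurrence \eqref{rec-s_n}. The one point I would state carefully is that it is precisely the \emph{common finite nonzero} limit in numerator and denominator that forces the ratio to $1$; were the limit $0$ or $\infty$, one could not conclude directly. Combined with Corollary \ref{kaifang-dec-cor}, which shows this ratio is strictly decreasing, this establishes the second half of Conjecture \ref{Conj:conj-1}.
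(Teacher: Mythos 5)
Your proof is correct, and it takes a genuinely different and simpler route than the paper's. You deduce the result purely from \eqref{Lim:kaifang-limit}: since $\sqrt[n]{S_n}\to 9$ and the shifted sequence $\sqrt[n+1]{S_{n+1}}$ has the same limit, the quotient rule for convergent sequences gives the ratio limit $9/9=1$; the only point of care is that the common limit be finite and nonzero, which you correctly flag. There is no circularity, since Corollary \ref{Cor:S_kaifang-increase} is established before this theorem and does not depend on it. The paper instead argues more heavily: it invokes Lemma \ref{Lem:h(n-1)-S_n-h(n)} to sandwich $S_n$ between $7\prod_{i=2}^{n}h(i-1)$ and $7\prod_{i=2}^{n}h(i)$, bounds $\log\bigl(\sqrt[n+1]{S_{n+1}}/\sqrt[n]{S_n}\bigr)$ above and below by explicit expressions in these products, and then evaluates the limits of those bounding expressions (both equal to $0$) using Mathematica, concluding by squeezing. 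What the paper's route buys is independence from the value $9$ of the root limit --- it needs only the elementary bounds $h(n-1)<s_n<h(n)$ --- but at the cost of outsourcing the final limit evaluations to computer algebra, which is arguably the least rigorous step in the whole paper. Your argument avoids software entirely, reuses work already done, and dovetails with Corollary \ref{kaifang-dec-cor} exactly as you say to settle the second half of Conjecture \ref{Conj:conj-1}.
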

\begin{proof}
By Corollary \ref{Cor:S_kaifang-increase}, we see that
\begin{equation}\label{kaifang-lim-low-BOUND}
\frac{\sqrt[n+1]{S_{n+1}}}{\sqrt[n]{S_n}}>1.
\end{equation}
In terms of Lemma \ref{Lem:h(n-1)-S_n-h(n)}, we have
$$7\prod_{i=2}^{n}h(i-1)<S_n<7\prod_{i=2}^{n}h(i).$$
Therefore,
\begin{align*}
\log{\left(\frac{\sqrt[n+1]{S_{n+1}}}{\sqrt[n]{S_n}}\right)}&
=\frac{\log{S_{n+1}}}{n+1}-\frac{\log{S_{n}}}{n}\\
&<\frac{\log{\left(7\prod_{i=2}^{n+1}h(i)\right)}}{n+1}-
\frac{\log{\left(7\prod_{i=2}^{n}h(i-1)\right)}}{n},
\end{align*}
and
\begin{align*}
\log{\left(\frac{\sqrt[n+1]{S_{n+1}}}{\sqrt[n]{S_n}}\right)}&
=\frac{\log{S_{n+1}}}{n+1}-\frac{\log{S_{n}}}{n}\\
&>\frac{\log{\left(7\prod_{i=2}^{n+1}h(i-1)\right)}}{n+1}-
\frac{\log{\left(7\prod_{i=2}^{n}h(i)\right)}}{n}.
\end{align*}

By invoking Mathematical software {\tt{Mathematica 10.0}}, we obtain that
\begin{align*}
\lim_{n\rightarrow\infty}\left(\frac{\log{\left(7\prod_{i=2}^{n+1}h(i)\right)}}{n+1}-
\frac{\log{\left(7\prod_{i=2}^{n}h(i-1)\right)}}{n}\right)&=0,\\
\lim_{n\rightarrow\infty}\left(\frac{\log{\left(7\prod_{i=2}^{n+1}h(i-1)\right)}}{n+1}-
\frac{\log{\left(7\prod_{i=2}^{n}h(i)\right)}}{n}\right)&=0.
\end{align*}
Thus,
\begin{align*}
\lim_{n\rightarrow\infty}\log{\left(\frac{\sqrt[n+1]{S_{n+1}}}{\sqrt[n]{S_n}}\right)}=0,
\end{align*}
i.e,
\begin{align*}
\lim_{n\rightarrow\infty}\frac{\sqrt[n+1]{S_{n+1}}}{\sqrt[n]{S_n}}=1.
\end{align*}
This ends the proof.
\qed
\end{proof}
\subsection{Proof of Theorem \ref{Thm:confirm conj}}
\begin{proof}
Corollary \ref{Cor:S-ratio-limit} confirms the former part of Conjecture \ref{Conj:conj-1}.
Additionally, Corollary \ref{Cor:S_kaifang-increase} and
Theorem \ref{Thm:kaifangsn-limit} give an affirmative answer to the latter part of Conjecture \ref{Conj:conj-1}.
Thus, this completes the proof of Theorem \ref{Thm:confirm conj}.
\qed
\end{proof}

\vspace{.3cm}

\noindent{\bf Acknowledgments.} This work was
supported by the National Science Foundation of China.

\end{document}